\theoremstyle{definition}%non italic environment
\newtheorem{theo}{Theorem}[section]
\newcommand{\ol}{\overline}
\newcommand{\be}{\begin{equation}}
\newcommand{\ee}{\end{equation}}
\newcommand{\beas}{\begin{eqnarray*}}
\newcommand{\eeas}{\end{eqnarray*}}
\newcommand{\bea}{\begin{eqnarray}}
\newcommand{\eea}{\end{eqnarray}}
\numberwithin{equation}{section}
\begin{document}
\title[Unique range sets without Fujimoto's hypothesis]{Unique range sets without Fujimoto's hypothesis}
\date{}
\author[B. Chakraborty]{Bikash Chakraborty}
\date{}
\address{$^{1}$Department of Mathematics, Ramakrishna Mission Vivekananda Centenary College, Rahara,
West Bengal 700 118, India.}
\email{bikashchakraborty.math@yahoo.com, bikash@rkmvccrahara.org}
\maketitle
\let\thefootnote\relax
\footnotetext{2010 Mathematics Subject Classification: 30D30, 30D20, 30D35.}
\footnotetext{Key words and phrases: Unique range set, Uniqueness polynomial, Fujimoto.}
%%%%%%%%%%%%%%%%%%%%%%%%%%%%%%%%%%%%%%%%%%%%%%%%%%%%%%%%%%%%%%%%%%%%%%%%%%%%%%%%%%%%%%%%%%%%%%%%%%%%%%%%%%%%%%%%%%%%%%%%%%%%%%%%%%%%%%%%%%%%%%%%%%%%%%%%%%
\begin{abstract} This paper studies the uniqueness of two nonconstant meromorphic functions when they share a finite set. Moreover, we will give the existence of unique range sets for meromorphic functions that are zero sets of some polynomials that do not necessarily satisfy the Fujimoto's hypothesis (\cite{Fu}).
\end{abstract}
%%%%%%%%%%%%%%%%%%%%%%%%%%%%%%%%%%%%%%%%%%%%%%%%%%%%%%%%%%%%%%%%%%%%%%%%%%%%%%%%%%%%%%%%%%%%%%%%%%%%%%%%%%%%%%%%%%%%%%%%%%%%%%%%%%%%%%%%%%%%%%%%%%%%%%%%%%
\section{Introduction}
We use $M(\mathbb{C})$  to denote the set of all meromorphic  functions in $\mathbb{C}$. Let $S\subset\mathbb{C}\cup\{\infty\}$ be a non-empty set with distinct elements. Further suppose that $f,~g$ be two non-constant meromorphic (resp. entire) functions. We set $$E_{f}(S)=\bigcup\limits_{a\in S}\{z~:~f(z)-a=0\},$$
where a zero of $f-a$ with multiplicity $m$ counts $m$ times in $E_{f}(S)$. If $E_{f}(S)=E_{g}(S)$, then we say that $f$ and $g$ share the set $S$ CM.\par If $E_{f}(S)=E_{g}(S)$ implies $f\equiv g$, then the set $S$ is called a \emph{unique range set} for meromorphic (resp. entire) functions, in short, URSM (resp. URSE).\par
The first example of a unique range set was given by  F. Gross and C. C. Yang (\cite{GY}). They proved that if two non-constant entire functions $f$ and $g$ share the set $S=\{z\in \mathbb{C}:e^{z}+z=0\}$ CM, then $f\equiv g$. Since then, many efforts
were made to construct new unique range sets with cardinalities as small as possible (see chapter $10$ of \cite{YY}).\par
So far, the smallest  URSM has $11$ elements which was constructed by G. Frank and M. Reinders (\cite{FR}). That URSM is the zero set of the following polynomial.
\begin{equation}\label{fr}
P(z)=\frac{(n-1)(n-2)}{2}z^{n}-n(n-2)z^{n-1}+\frac{n(n-1)}{2}z^{n-2}-c,
\end{equation}
where $n\geq 11$ and $c(\not= 0,1)$ is any complex number.\par
To characterize the unique range sets, in 2000, H. Fujimoto (\cite{Fu}) made a major breakthrough by observing that almost all \emph{unique range sets  are the zero sets of some polynomials which satisfy an injectivity condition} (which is known as Fujimoto's hypothesis). To state his result, we recall some well-known definitions.\par
Let $P(z)$ be a non-constant monic polynomial in $\mathbb{C}[z]$. The polynomial $P(z)$ is called a \emph{uniqueness polynomial} for meromorphic (resp. entire) functions, in short, UPM (resp. UPE)  if the condition $P(f)\equiv P (g)$ implies $f \equiv g$ where $f$ and $g$ are any two non-constant meromorphic (resp. entire) functions.\par
Also, the polynomial $P(z)$ is called a \emph{strong uniqueness polynomial} for meromorphic (resp. entire) functions, in short, SUPM (resp. SUPE) if the condition  $P(f)\equiv cP (g)$ implies $f \equiv g$ where $f$ and $g$ are any two non-constant meromorphic (resp. entire) functions  and  $c$ is any non-zero complex number.\par
Thus strong uniqueness polynomials are uniqueness polynomials but the converse is not true, in general. For example,  we consider the polynomial $P(z)=az+b$ $(a\neq 0)$. Then for any non-constant meromorphic function (resp. entire) $g$ if we take $f := cg-\frac{b}{a}(1-c)$  $(c \neq 0,1)$, then we see that $P(f)=cP(g)$ but $f \neq g$.\par
Let $P(z)$ be a polynomial such that its derivative $P'(z)$ has $k$ distinct zeros $d_1, d_2, \ldots, d_k$ with multiplicities $q_1, q
_2,\ldots,q_{k}$ respectively. The polynomial $P(z)$ is said to satisfy \enquote{condition H} (\cite{Fu}) (which is known as Fujimoto's hypothesis) if
\begin{equation}\label{jkms2}
  P(d_{l_{s}})\not=P(d_{l_{t}})~~~~(1\leq l_{s}< l_{t}\leq k),
\end{equation}
Now, we state Fujimoto's (\cite{Fu}) result.
\begin{theo}\label{1111}(\cite{Fu})
 Let $P(z)$ be a strong uniqueness polynomial of the form $P(z)=(z-a_{1})(z-a_{2})\ldots(z-a_{n})$ ($a_{i}\not=a_{j}$) satisfying the condition (\ref{jkms2}). Moreover, either $k\geq3$ or $k=2$ and $\min\{q_{1},q_{2}\}\geq 2$. If $S=\{a_{1}, a_{2},\ldots, a_{n}\}$, then $S$ is a URSM (resp. URSE) whenever $n\geq2k+7$ (resp. $n\geq2k+3$).
\end{theo}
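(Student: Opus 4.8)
\emph{Sketch of the intended proof.} Suppose $f,g$ are nonconstant meromorphic functions with $E_f(S)=E_g(S)$, and set $F:=P(f)$, $G:=P(g)$; since the $a_i$ are distinct, they are simple zeros of $P$, so this hypothesis says precisely that $F$ and $G$ share the value $0$ CM. The plan is to show that $H:=F/G$ is a nonzero constant $c$ — for then $P(f)\equiv cP(g)$, and the hypothesis that $P$ is a SUPM forces $f\equiv g$, which is the whole theorem. Thus the SUPM assumption enters only at the very end, while conditions (\ref{jkms2}) and ``$k\ge3$, or $k=2$ with $\min\{q_1,q_2\}\ge2$'' are what must make $H$ constant. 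As a preliminary I would record $T(r,f)\asymp T(r,g)$: applying the second fundamental theorem to $f$ with the targets $a_1,\dots,a_n,\infty$ and using $\sum_{i}\overline N(r,a_i;f)=\overline N(r,0;F)=\overline N(r,0;G)\le nT(r,g)+O(1)$ gives $(n-2)T(r,f)\le nT(r,g)+S(r,f)$, and symmetrically, so $f$ and $g$ admit a common class $S(r)$ of small functions.

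The substance is the following estimate. Write $P'(z)=n\prod_{j=1}^{k}(z-d_j)^{q_j}$ with $\sum q_j=n-1$, put $b_j:=P(d_j)$, and note that, $d_j$ being a zero of $P'$ of exact multiplicity $q_j$, one has $P(z)-b_j=(z-d_j)^{q_j+1}R_j(z)$ with $\deg R_j=n-q_j-1$; condition (\ref{jkms2}), which makes $b_1,\dots,b_k$ pairwise distinct, guarantees that $d_j$ is the only multiple root of $P(z)-b_j$, so each $R_j$ has simple roots, all different from $d_j$ and, across different $j$, from one another. Consequently $F$ attains each value $b_j$ with multiplicity $\ge q_j+1$ at every zero of $f-d_j$, and likewise $G$ at every zero of $g-d_j$. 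I would then combine the second fundamental theorem applied to $F$ with the $k+1$ targets $b_1,\dots,b_k,\infty$ — exploiting these forced multiplicities, so that the $d_j$-points of $f$ contribute to $\sum_j\overline N(r,b_j;F)$ only with weight $1/(q_j+1)$ — together with the identity
$$\frac{H'}{H}=\frac{F'}{F}-\frac{G'}{G}=\frac{f'P'(f)}{P(f)}-\frac{g'P'(g)}{P(g)},$$
whose poles, once the simple poles over the common zeros of $F$ and $G$ cancel, lie only over poles of $f$ or $g$, so that $N(r,H'/H)\le\overline N(r,\infty;f)+\overline N(r,\infty;g)+O(1)$ while $m(r,H'/H)=S(r)$ by the logarithmic derivative lemma. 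Carrying both estimates — for $f$ and for $g$ — and adding, one arrives at an inequality in which $T(r,f)+T(r,g)$ appears on the left with a coefficient that is positive exactly under the dichotomy $k\ge3$ or $k=2,\ \min\{q_1,q_2\}\ge2$, and that exceeds the right-hand side exactly when $n\ge2k+7$; this forces $T(r,H)=S(r)$. Feeding $F=HG$ back into the same ramification count — now using the separation of the $b_j$, which makes the zeros of $f-d_j$ and of $g-d_j$ essentially coincide — then upgrades ``$H$ small'' to ``$H$ constant''. In the entire case there are no pole terms, and the threshold drops to $n\ge2k+3$.

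Once $H\equiv c\ (\ne0)$, we have $P(f)\equiv cP(g)$, and the SUPM property of $P$ gives $f\equiv g$. The hard part is the middle paragraph: arranging the second fundamental theorem and the logarithmic-derivative estimate so that the double bookkeeping of multiplicities at the critical points $d_j$ and of the poles of $f$ and $g$ closes with the sharp constant $2k+7$ rather than a cruder one, and then converting the smallness of $H$ into genuine constancy by exploiting condition (\ref{jkms2}).
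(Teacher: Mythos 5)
Your top-level skeleton is the right one, and it is in fact the only place the SUPM hypothesis can enter: reduce CM sharing of $S$ to a relation $P(f)\equiv cP(g)$ and then quote the strong uniqueness property. (Note that the paper itself does not prove Theorem \ref{1111} — it is quoted from \cite{Fu} — so the only in-paper point of comparison is the proof of Theorem \ref{thb1.1}, which performs exactly this kind of reduction.) Your preliminary estimate $T(r,f)=O(T(r,g))$ and conversely is also fine. The problem is that the middle paragraph, which you yourself identify as the substance, does not hold together as described, and it is precisely the step that the standard machinery is designed to handle.

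Two concrete objections. First, the second fundamental theorem applied to $F=P(f)$ with the targets $b_1,\dots,b_k,\infty$ is essentially vacuous: besides the multiple root $d_j$, the polynomial $P(z)-b_j$ has $n-q_j-1$ simple roots (the roots of your $R_j$), and the $b_j$-points of $F$ lying over those roots enter $\overline N(r,b_j;F)$ with full weight, so $\sum_j\overline N(r,b_j;F)\le (kn-n+1)T(r,f)+O(1)$ while the left-hand side of the SMT is only $(k-1)nT(r,f)$; no coefficient gain, and hence no threshold of the shape $2k+7$, can come out of this application. The proofs that do produce such thresholds apply the SMT to $f$ and $g$ themselves with the $n+k+1$ targets $a_1,\dots,a_n,d_1,\dots,d_k,\infty$, exactly as the paper does in the chain of inequalities culminating in (\ref{equn1.444}). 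Second, and more fundamentally, your route to ``$T(r,H)=S(r)$'' for $H=F/G$ only controls zeros and poles of $H$ and $m(r,H'/H)$; it never controls $m(r,H)$, and it cannot: in the entire case $H=e^{\alpha}$ has no zeros or poles at all, so no amount of zero/pole bookkeeping plus the logarithmic derivative lemma will force $H$ to be small, let alone constant. This is where the known arguments do something genuinely different — in the paper's analogue (Theorem \ref{thb1.1}) one works with $F=1/P(f)$, $G=1/P(g)$ and the auxiliary function $F''/F'-G''/G'$: if it is not identically zero one reaches the contradiction with $n\ge 2k+7$, and if it vanishes one integrates to the M\"{o}bius relation $1/P(f)\equiv c_0/P(g)+c_1$ and must then eliminate $c_1\ne0$. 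It is in that residual M\"{o}bius case (not in making an SMT coefficient positive, as your sketch suggests) that hypotheses of the type (\ref{jkms2}) together with ``$k\ge3$, or $k=2$ and $\min\{q_1,q_2\}\ge2$'' are actually spent, via the forced ramification of $P(g)$ over the critical values $P(d_j)$. So the endgame of your proposal is correct, but the mechanism you propose for making $P(f)/P(g)$ constant is missing, and the specific estimates you describe would not close.
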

But, in 2011, T. T. H. An (\cite{An}) constructed a URSM that is the zero set of a polynomial which is not necessarily satisfying the Fujimoto's hypothesis (\ref{jkms2}).
\begin{theo}\label{B1}(\cite{An}) Let $P(z)=a_{n}z^{n}+a_{m}z^{m}+a_{m-1}z^{m-1}+\ldots+a_{0}$, $(1\leq m<n,~a_{i}\in \mathbb{C}, ~\text{and}~a_{m}\not=0)$ be a polynomial of degree $n$ with only simple zeros, and let $S$ be its zero set. Further suppose that $k$ be the number of distinct zeros of the derivative $P'(z)$ and $I=\{i~:~a_{i}\not=0\}$, $\lambda=\min\{i~:~i\in I\}$, $J=\{i-\lambda~:~i\in I\}$. If $n\geq \max\{2k+7,m+4\}$, then the following statements are equivalent:
\begin{enumerate}
\item [i)] $S$ is a URSM.
\item [ii)] $P$ is a SUPM.
\item [iii)] $S$  is affine rigid.
\item [iv)] The greatest common divisors of the indices respectively in $I$ and $J$ are both $1$.
\end{enumerate}
\end{theo}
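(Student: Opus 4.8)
The plan is to close the cycle $(i)\Rightarrow(ii)\Rightarrow(iii)\Rightarrow(iv)\Rightarrow(i)$, the first three links being short and the last carrying the analytic content. For $(i)\Rightarrow(ii)$: if $P(f)\equiv cP(g)$ with $c\neq0$ and $f,g$ non-constant meromorphic, then because the zeros of $P$ are simple, any point where $f$ attains a value of $S$ with multiplicity $\mu$ is a zero of $P(f)$, hence of $P(g)$, of multiplicity $\mu$, so $g$ also attains a value of $S$ there with multiplicity $\mu$; thus $E_{f}(S)=E_{g}(S)$, and $(i)$ gives $f\equiv g$, so $P$ is a SUPM. For $(ii)\Rightarrow(iii)$: if $S$ were not affine rigid, i.e.\ if some affine $T(z)=\alpha z+\beta\neq z$ satisfied $T(S)=S$, then since $T$ is a biholomorphism one checks the polynomial identity $P(T(z))\equiv\alpha^{n}P(z)$, so the non-constant entire functions $f:=T(z)$ and $g:=z$ would satisfy $P(f)\equiv\alpha^{n}P(g)$ with $f\not\equiv g$, contradicting that $P$ is a SUPM. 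For $(iii)\Rightarrow(iv)$ I argue contrapositively: since $P$ has only simple zeros, $z=0$ is a zero of $P$ of order $\lambda\in\{0,1\}$; if $\gcd(I)=d>1$ then necessarily $\lambda=0$ and $P(z)=\widehat P(z^{d})$ for a polynomial $\widehat P$ with $\widehat P(0)\neq0$, while if $\gcd(J)=d>1$ with $\lambda=1$ then $P(z)=z\,\widehat P(z^{d})$; in either case the rotation $z\mapsto\omega z$ by a primitive $d$-th root of unity $\omega$ is a non-identity affine self-map of $S$, so $S$ is not affine rigid.

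The work is in $(iv)\Rightarrow(i)$, which I would run in two stages. \emph{Algebraic stage: $(iv)$ implies $P$ is a SUPM.} Suppose $P(f)\equiv cP(g)$; I must show $f\equiv g$ (one may assume $f,g$ non-constant). Regard $(f,g)$ as a non-constant meromorphic parametrization of a component of the affine curve $\mathcal{C}_{c}\colon P(x)-cP(y)=0$. The degree-$n$ part of $P(x)-cP(y)$ equals $a_{n}\prod_{\zeta^{n}=c}(x-\zeta y)$, a product of $n$ distinct lines through the origin; the gap $n\ge m+4$ then forces every component of $\mathcal{C}_{c}$ other than one of the lines $x=\zeta y$ (with $\zeta^{n}=c$) to have normalization (minus the points over infinity) of genus $\ge2$ or with at least three punctures, hence to admit no non-constant meromorphic parametrization. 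Consequently $f\equiv\zeta g$ for some $\zeta$ with $\zeta^{n}=c$; substituting into $P(f)\equiv cP(g)$ and matching coefficients (the powers $g^{i}$, $i\in I$, are linearly independent since $g$ is non-constant) gives $\zeta^{i}=c$ for all $i\in I$, so $\operatorname{ord}(\zeta)\mid\gcd\{\,i-i':i,i'\in I\,\}=\gcd(J)=1$, i.e.\ $\zeta=1$ and $f\equiv g$. (The condition $\gcd(I)=1$ is what, in the general non-simple-zero setting, rules out the further low-genus components coming from a monomial substitution in $P$ itself; given simple zeros it is automatic once $\gcd(J)=1$.) This stage could equally be run by Nevanlinna theory, comparing the characteristics of the two sides of $a_{n}(f^{n}-cg^{n})=-\sum_{i\in I,\;i\le m}a_{i}(f^{i}-cg^{i})$, the gap $n-m\ge4$ again being decisive.

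\emph{Analytic stage: $P$ a SUPM and $n\ge2k+7$ imply $S$ is a URSM.} Given $E_{f}(S)=E_{g}(S)$, the functions $P(f)$ and $P(g)$ have the same zeros counted with multiplicity; applying the second main theorem to $f$ and to $g$ with the $k$ zeros $d_{1},\dots,d_{k}$ of $P'$ as targets, and using that $P$ is unramified over $0$ while its ramification over each $P(d_{j})$ is controlled by $q_{j}$, a Fujimoto-style argument — which, unlike Theorem~\ref{1111}, does \emph{not} need the injectivity condition \eqref{jkms2} — yields $P(f)\equiv cP(g)$ for some $c\neq0$ as soon as $n\ge2k+7$ (the bound excludes the alternative possibility of a bilinear relation between $P(f)$ and $P(g)$). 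Since $P$ is a SUPM, $f\equiv g$, so $S$ is a URSM, and the cycle closes.

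I expect the main obstacle to be the algebraic stage: one has to convert the gap hypothesis $n\ge m+4$ into a clean statement that all but the linear components of $\mathcal{C}_{c}$ are non-parametrizable (equivalently, into an exact algebraic constraint if one prefers the Nevanlinna route), and then determine precisely which of the surviving components occur and for which $c$ — this is exactly where $\gcd(I)=1$ and $\gcd(J)=1$ are consumed, the failure of either returning one of the monomial-substitution relations $f\equiv\omega g$, $\omega\neq1$, that break strong uniqueness. A secondary technicality is the pole bookkeeping in the analytic stage, since $P(f)$ and $P(g)$ need not share poles; this is absorbed in the usual way by working with $1/P(f)$ and $1/P(g)$, whose zeros and poles are then controlled by those of $f$ and $g$.
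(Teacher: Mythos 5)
Note first that the paper does not prove this statement at all: Theorem \ref{B1} is quoted verbatim from \cite{An} as background, so there is no internal proof to compare with and your attempt has to stand on its own. The three short links of your cycle are fine: $(i)\Rightarrow(ii)$ correctly uses the simplicity of the zeros of $P$ to pass from $P(f)\equiv cP(g)$ to $E_f(S)=E_g(S)$ with multiplicities; $(ii)\Rightarrow(iii)$ via $P(T(z))\equiv\alpha^{n}P(z)$ is correct; and your case split in $(iii)\Rightarrow(iv)$ (using that simple zeros force $\lambda\in\{0,1\}$, so either $P(z)=\widehat P(z^{d})$ or $P(z)=z\widehat P(z^{d})$, giving a rotational symmetry of $S$) is sound.

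The genuine gaps are both inside $(iv)\Rightarrow(i)$, i.e.\ exactly where the theorem's content lies. In the analytic stage you assert that $n\geq 2k+7$ by itself "excludes the alternative possibility of a bilinear relation between $P(f)$ and $P(g)$". It does not: in every argument of this type (Fujimoto's, Banerjee--Lahiri's, and the present paper's own proof of Theorem \ref{thb1.1}), the bound $n\geq 2k+7$ only forces $H\equiv 0$, hence $1/P(f)\equiv c_{0}/P(g)+c_{1}$; ruling out $c_{1}\neq 0$ is precisely the step that consumes extra structural hypotheses --- condition \eqref{jkms2} in Theorem \ref{1111}, the structure of $P(z)-P(0)$ (Case I) in Theorem \ref{thb1.1}, and in An's setting the gap $n\geq m+4$, which your sketch never actually uses in this stage even though it sits in the hypothesis for exactly this purpose. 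In the algebraic stage, the decisive claim that every non-linear component of $P(x)-cP(y)=0$ admits no non-constant meromorphic parametrization is asserted rather than proved, and the criterion you offer ("genus $\geq 2$ or at least three punctures") is not sufficient for meromorphic $f,g$: the pair $(f,g)$ is allowed to have poles, so the points over infinity are not omitted, and a rational or elliptic component with many points at infinity is not excluded by a Picard-type puncture count. Establishing that only the linear components $x=\zeta y$ can carry a parametrization is the technical core of \cite{An} (resting on her earlier curve-theoretic work), and it is also where the gcd conditions genuinely enter; your final coefficient-matching step ($\zeta^{i}=c$ for $i\in I$, hence $\zeta=1$ when $\gcd(J)=1$) is correct but only applies after that reduction is secured. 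As it stands, the proposal is a reasonable two-stage outline that parallels the literature, but both decisive steps are deferred and one of them is justified by an incorrect appeal to the $2k+7$ bound.
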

Later, in 2012, using the concept of \emph{weighted sharing} (\cite{L}), A. Banerjee and I. Lahiri constructed a unique range set that is the zero set of a polynomial which is not necessarily satisfying the Fujimoto's hypothesis. To state the result of Banerjee and Lahiri, we need to recall the definition of \emph{weighted set sharing} (\cite{BL}).\par
Let $f$ and $g$ be two nonconstant meromorphic functions and  $l$ be any non-negative integer or infinity. For $a\in\mathbb{C}\cup\{\infty\}$, we denote by $E_{l}(a;f)$, the set of all $a$-points of $f$, where an $a$-point of multiplicity $m$ is counted $m$ times if $m\leq l$ and $l+1$ times if $m>l$. If $E_{l}(a;f)=E_{l}(a;g)$, then we say that \emph{$f$ and $g$ share the value $a$ with weight $l$}.\par
For $S\subset \mathbb{C}\cup\{\infty\}$, we define $E_{f}(S,l)=\cup_{a\in S}E_{l}(a;f).$  If $E_{f}(S,l)=E_{g}(S,l)$, then we say that $f$ and $g$ share the set $S$ with weight $l$, or simply \emph{$f$ and $g$ share $(S,l)$}.\par If $E_{f}(S,l)=E_{g}(S,l)$ implies $f\equiv g$, then the set $S$ is called a \emph{unique range set for meromorphic (resp. entire) functions with weight $l$}, in short, URSM$_{l}$ (resp. URSE$_{l}$).
\begin{theo}\label{B}(\cite{BL}) Let $P(z)=a_{n}z^{n}+\sum\limits_{j=2}^{m}a_{j}z^{j}+a_{0}$ be a polynomial of degree $n$, where $n-m\geq 3$ and $a_{p}a_{m}\not=0$ for some positive integer $p$ with $2\leq p\leq m$ and $\gcd(p,3)=1$. Suppose further that $S=\{\alpha_{1},\alpha_{2},\ldots,\alpha_{n}\}$ be the set of all distinct zeros of $P(z)$. Let $k$ be the number of distinct zeros of the derivative $P'(z)$. If $n\geq 2k+7~(\text{resp.}~2k+3)$, then the following statements are equivalent:
\begin{enumerate}
\item [i)] $P$ is a SUPM (resp. SUPE).
\item [ii)] $S$ is a URSM$_{2}$ (resp. URSE$_{2}$).
\item [iii)] $S$ is a URSM (resp. URSE).
\item [iv)] $P$ is a UPM (resp. UPE).
\end{enumerate}
\end{theo}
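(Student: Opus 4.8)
The plan is to prove the four assertions equivalent by closing the cycle $\mathrm{(i)}\Rightarrow\mathrm{(ii)}\Rightarrow\mathrm{(iii)}\Rightarrow\mathrm{(iv)}\Rightarrow\mathrm{(i)}$, disposing of the two short edges first. The implication $\mathrm{(ii)}\Rightarrow\mathrm{(iii)}$ is immediate: if $f$ and $g$ share $S$ CM then they share $(S,2)$ as well, so if weight-$2$ sharing already forces $f\equiv g$, then so does CM sharing. For $\mathrm{(iii)}\Rightarrow\mathrm{(iv)}$, note that since $\deg P=n$ and $P$ has exactly $n$ distinct roots $\alpha_1,\dots,\alpha_n$, every root of $P$ is simple; hence whenever $P(f)\equiv P(g)$, at each common zero $z_0$ the order of vanishing of $P(f)$ equals that of $f-\alpha_i$ (where $f(z_0)=\alpha_i$), and likewise for $g$, so that $E_f(S)=E_g(S)$ counting multiplicities. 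The URSM hypothesis of $\mathrm{(iii)}$ then gives $f\equiv g$, i.e.\ $P$ is a UPM; the entire case is identical.

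The step $\mathrm{(iv)}\Rightarrow\mathrm{(i)}$ must upgrade ``uniqueness polynomial'' to ``strong uniqueness polynomial'', and this is where the special shape of $P$ is used. Suppose $P(f)\equiv cP(g)$ with $c\neq0$ and $f,g$ nonconstant; the aim is to force $c=1$, after which $\mathrm{(iv)}$ finishes. First I would compare pole structures: a pole of $f$ of order $t$ gives $P(f)$ a pole of order $nt$, and since $a_nf^n$ and $ca_ng^n$ are the dominant terms there while the next exponent $m$ is separated by the gap $n-m\ge3$, the identity $P(f)\equiv cP(g)$ forces $f$ and $g$ to have the same poles with equal multiplicities and, near each such pole, to be scalar multiples of one another. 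Feeding this back into $P(f)\equiv cP(g)$ and matching coefficients among the exponents $n$, $m$, $p$ that occur in $P$ — here the role of $\gcd(p,3)=1$ together with $n-m\ge3$ appears — one reaches a contradiction unless $c=1$; this is, in effect, the same arithmetic obstruction behind An's $\gcd$-conditions in Theorem~\ref{B1}. With $c=1$, $\mathrm{(iv)}$ yields $f\equiv g$, so $P$ is a SUPM.

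For $\mathrm{(i)}\Rightarrow\mathrm{(ii)}$, assume $P$ is a SUPM and that $f,g$ share $(S,2)$. Because $P$ has only simple roots, the zeros of $P(f)$ are exactly $E_f(S)$ and those of $P(g)$ are exactly $E_g(S)$, so weight-$2$ sharing says precisely that $P(f)$ and $P(g)$ have the same zeros, with the same multiplicities except possibly at points of multiplicity greater than $2$. I would then run Fujimoto's method: set $\Phi=\dfrac{(P(f))'}{P(f)}-\dfrac{(P(g))'}{P(g)}$, use the lemma on the logarithmic derivative to get $m(r,\Phi)=S(r)$, and use the shared-zero structure to bound $N(r,\Phi)$ by the contributions of the high-multiplicity zeros and of the poles of $f$ and $g$ only. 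Combining this with Nevanlinna's second main theorem applied to $f$ (resp.\ $g$) with the $n$ targets $\alpha_1,\dots,\alpha_n$, and estimating the discrepancy between counted and truncated zeros of $P(f)$ through the $k$ critical values of $P$, one arrives at a numerical inequality in $n$ and $k$; the bound $n\ge 2k+7$ (resp.\ $n\ge2k+3$ in the entire case, where no poles intervene) is then precisely what makes this inequality untenable unless $\Phi\equiv0$, i.e.\ unless $P(f)\equiv cP(g)$ for some nonzero constant $c$. The SUPM hypothesis then gives $f\equiv g$, so $S$ is a URSM$_2$ (resp.\ URSE$_2$).

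The crux is $\mathrm{(i)}\Rightarrow\mathrm{(ii)}$: making the truncated-counting estimates sharp enough that the threshold is exactly $2k+7$ (resp.\ $2k+3$) under mere weight-$2$ sharing — rather than full CM sharing — is where essentially all the analytic work lies, since the weaker hypothesis forces one to carry the extra ``weight-$2$'' error terms through every use of the second main theorem. The algebraic step $\mathrm{(iv)}\Rightarrow\mathrm{(i)}$ is lighter but still needs care: one must verify that $\gcd(p,3)=1$ and $n-m\ge3$ genuinely exclude the exceptional proportionalities $P(f)\equiv cP(g)$ with $c\neq1$, rather than merely making them unlikely.
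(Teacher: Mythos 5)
Note first that the paper does not prove Theorem \ref{B} at all: it is quoted verbatim from \cite{BL} as background, so there is no in-paper proof to compare against; the closest models inside the paper are the proofs of Theorems \ref{thb1.1} and \ref{thb1.2}, which treat a different class of polynomials with the same toolkit. Your cycle $\mathrm{(i)}\Rightarrow\mathrm{(ii)}\Rightarrow\mathrm{(iii)}\Rightarrow\mathrm{(iv)}\Rightarrow\mathrm{(i)}$ and the two easy edges are fine: CM sharing implies weight-$2$ sharing, and since all zeros of $P$ are simple, $P(f)\equiv P(g)$ does give $E_f(S)=E_g(S)$ with multiplicities. The two substantive edges, however, are not proved.

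For $\mathrm{(i)}\Rightarrow\mathrm{(ii)}$ the auxiliary function you chose, $\Phi=\frac{(P(f))'}{P(f)}-\frac{(P(g))'}{P(g)}$, cannot drive the argument: at a common zero of $P(f)$ and $P(g)$ of equal multiplicity $q$ both logarithmic derivatives have the same principal part $\frac{q}{z-z_0}$, so $\Phi$ is merely \emph{regular} there, not zero. Hence in the case $\Phi\not\equiv0$ you have no supply of zeros of $\Phi$ to set against $m(r,\Phi)=S(r)$ and your bound on $N(r,\Phi)$; smallness of $T(r,\Phi)$ alone yields no contradiction, so the dichotomy ``contradiction unless $\Phi\equiv0$'' never materializes. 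The device that works (in the paper's proof of Theorem \ref{thb1.1}, and in \cite{BL}) is $H=\frac{F''}{F'}-\frac{G''}{G'}$ with $F=1/P(f)$, $G=1/P(g)$: the Laurent expansion shows $H$ vanishes at every common \emph{simple} pole of $F$ and $G$, and it is exactly this vanishing, combined with the second main theorem applied with the $n+k$ targets $\alpha_i,\lambda_j$, that produces an inequality incompatible with $n\geq 2k+7$ (resp.\ $2k+3$). Moreover, with $H$ the degenerate case gives only $\frac{1}{P(f)}\equiv\frac{c_0}{P(g)}+c_1$, and one must still exclude $c_1\neq0$ by a separate second-main-theorem argument exploiting the special shape of $P$ (the analogue of Case I in the paper); your sketch never meets this case, because your $\Phi\equiv0$ shortcut to $P(f)\equiv cP(g)$ was bought at the price of an unusable nondegenerate case. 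For $\mathrm{(iv)}\Rightarrow\mathrm{(i)}$, local pole/Laurent comparison plus ``matching coefficients among the exponents $n,m,p$'' is not an argument: it is vacuous in the entire (SUPE) half and whenever $f,g$ have few poles, and the claimed contradiction for $c\neq1$ is precisely what has to be proved. The known proofs are global Nevanlinna-theoretic ones — compare the paper's Theorem \ref{thb1.2}, where $P(f)\equiv c\,P(g)$ with $c\neq1$ is refuted by applying the first and second fundamental theorems to $P(f)$ with the values $\infty$, $P(0)$, $P(0)/c$ — and it is in such counting arguments (in the lemmas of \cite{BL}) that the hypotheses $n-m\geq3$, $a_pa_m\neq0$ and $\gcd(p,3)=1$ are actually consumed; your sketch invokes them only by name.
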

We have seen from theorem \ref{B1} and theorem \ref{B} that the unique range set generating polynomial is a specific polynomial, i.e., the unique range set generating polynomial has a gap after $n$-th degree term (where $n$ is the degree of the respective polynomial). The motivation of this short note is to construct a family of new unique range sets such that the corresponding generating polynomials  are \emph{not necessarily satisfying the Fujimoto's hypothesis} as well as \emph{the generating polynomials have no \enquote{such} gap}.
%%%%%%%%%%%%%%%%%%%%%%%%%%%%%%%%%%%%%%%%%%%%%%%%%%%%%%%%%%%%%%%%%%%%%%%%%%%%%%%%%%%%%%%%%%%%%%%%%%%%%%%%%%%%%%%%%%%%%%%%%%%%%%%%%%%%%%%%%%%%%%%%%%%%%%%%
%%%%%%%%%%%%%%%%%%%%%%%%%%%%%%%%%%%%%%%%%%%%%%%%%%%%%%%%%%%%%%%%%%%%%%%%%%%%%%%%%%%%%%%%%%%%%%%%%%%%%%%%%%%%%%%%%%%%%%%%%%%%%%%%%%%%%%%%%%%%%%%%%%%%%%%%%
\section{Main Results}
Let
\bea\label{eqn1.1} P(z)=z^{n}+a_{n-1}z^{n-1}+\ldots+a_{1}z+a_{0}, \eea
be a monic polynomial of degree $n$ in $\mathbb{C}[z]$ without multiple zeros. Let $P(z)-P(0)$ has $m_{1}$ simple zeros and $m_{2}$ multiple zeros. Further suppose that $P'(z)$ has $k$ distinct zeros.
\begin{theo}\label{thb1.1} Let $P(z)$ be a monic polynomial defined by (\ref{eqn1.1}) with $P(0)\not=0$. Suppose further that $S=\{\alpha_{1},\alpha_{2},\ldots,\alpha_{n}\}$ be the set of all distinct zeros of $P(z)$. If $k\geq 2$, $m_{1}+m_{2}\geq 5$ (resp. $3$) and $n\geq \max\{2k+7, m_{1}+m_{2}+3\}$ (resp. $n\geq \max\{2k+3, m_{1}+m_{2}+1\}$, then the following statements are equivalent:
\begin{enumerate}
\item [i)] $P$ is a SUPM (resp. SUPE).
\item [ii)] $S$ is a URSM$_{2}$ (resp. URSE$_{2}$).
\item [iii)] $S$ is a URSM (resp. URSE).
\end{enumerate}
\end{theo}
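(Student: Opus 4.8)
\emph{Proof strategy.} I would establish the cycle of implications $(i)\Rightarrow(ii)\Rightarrow(iii)\Rightarrow(i)$; only the first one carries analytic weight, the other two being formal.

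\smallskip
\noindent\emph{$(ii)\Rightarrow(iii)$ and $(iii)\Rightarrow(i)$.} If $f$ and $g$ share $S$ CM, i.e. $E_f(S)=E_g(S)$ as divisors, then capping every multiplicity at level $2$ on both sides preserves the equality, so $E_f(S,2)=E_g(S,2)$; hence every URSM$_2$ is a URSM (and likewise in the entire case). For $(iii)\Rightarrow(i)$, suppose $P(f)\equiv cP(g)$ for non-constant meromorphic $f,g$ and some $c\neq0$. Since $P(z)=\prod_{i=1}^{n}(z-\alpha_i)$ has only simple zeros, the divisor of zeros of $P(f)$ is exactly $E_f(S)$ and that of $cP(g)$ is exactly $E_g(S)$; as the two functions are identical, $E_f(S)=E_g(S)$, and because $S$ is a URSM, $f\equiv g$. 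Thus $P$ is a SUPM. The entire versions are identical.

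\smallskip
\noindent\emph{$(i)\Rightarrow(ii)$: the main step.} Assume $P$ is a SUPM and let $f,g$ be non-constant meromorphic with $E_f(S,2)=E_g(S,2)$, that is, $P(f)$ and $P(g)$ share $0$ with weight $2$. First I would record the standard preliminaries: the zeros of $P(f)$ are precisely the $S$-points of $f$, so the second main theorem applied to $f$ against the $n$ values of $S$ (together with $\infty$ in the meromorphic case), the same for $g$, and the identity $\overline N(r,0;P(f))=\overline N(r,0;P(g))$ coming from the shared zeros, yield $T(r,f)\asymp T(r,g)$. Next I would pass to appropriate linear-fractional transforms $F,G$ of $P(f),P(g)$ that share $1$ with weight $2$ and form Lahiri's auxiliary function
\[
 H=\Bigl(\frac{F''}{F'}-\frac{2F'}{F-1}\Bigr)-\Bigl(\frac{G''}{G'}-\frac{2G'}{G-1}\Bigr).
\]
A count of the poles of $H$, using the weight-$2$ sharing of $0$ by $P(f)$ and $P(g)$ and estimating against $T(r,f)+T(r,g)$, shows that $n\ge2k+7$ (resp.\ $n\ge2k+3$) forces $H\equiv0$; the number $k$ of zeros of $P'$ enters here because these zeros govern the multiple zeros of $P(f)-P(d)$ for the zeros $d$ of $P'$, and hence the ramification of $P(f)$. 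Integrating $H\equiv0$ twice gives a Möbius relation
\[
 P(f)\equiv\frac{AP(g)+B}{CP(g)+D}\qquad (A,B,C,D\in\mathbb{C},\ AD-BC\neq0).
\]

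\smallskip
\noindent\emph{The obstacle.} The decisive point — which I expect to be the main difficulty — is to eliminate every branch of this Möbius relation except $P(f)\equiv cP(g)$; this is exactly where the hypotheses on $P(z)-P(0)$ replace Fujimoto's condition H. The plan is to test the relation at the value $P(0)$. On the $f$-side, $P(f)=P(0)$ forces $f$ into the $m_1+m_2$-element set $B=\{b:P(b)=P(0)\}$, so the second main theorem for $f$ against the values of $B$ bounds $\overline N(r,0;P(f)-P(0))=\sum_{b\in B}\overline N(r,b;f)$ below by roughly $(m_1+m_2-2)\,T(r,f)$; this is nonvacuous precisely because $m_1+m_2\ge5$ (resp.\ $\ge3$) makes the coefficient large enough in the meromorphic (resp.\ entire) case. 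On the other side, in each degenerate branch the zeros (or poles) of $P(f)-P(0)$ get identified, through the relation, with the points where $g$ lies in a single fibre $P^{-1}(w)$; since a Möbius map is injective and $P(0)\neq0$, this fibre is distinct from the fibre $P^{-1}(0)$ attached via the relation to $S$, so $g$ meets two disjoint fibres of $P$ on prescribed sets of points, and a second application of the second main theorem to $g$ — with the same weight-$2$ bookkeeping and pole control as before — produces a growth estimate incompatible with $n\ge m_1+m_2+3$ (resp.\ $n\ge m_1+m_2+1$). Hence the only surviving possibility is $P(f)\equiv cP(g)$, and since $P$ is a SUPM we conclude $f\equiv g$, closing the cycle. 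The technical care is concentrated in choosing the right truncation levels and, in the meromorphic case, in tracking the poles of $f$ and $g$ — which is what produces the gap between $2k+7,\ m_1+m_2+3$ and the entire-function constants $2k+3,\ m_1+m_2+1$.
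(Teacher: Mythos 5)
Your overall architecture is the same as the paper's: the implications $(ii)\Rightarrow(iii)$ and $(iii)\Rightarrow(i)$ are formal (and your arguments for them are correct), and $(i)\Rightarrow(ii)$ is run through an auxiliary logarithmic-derivative function $H$, a counting argument in which $n\geq 2k+7$ (resp. $2k+3$) forces $H\equiv 0$, integration to a M\"{o}bius-type relation between $P(f)$ and $P(g)$, elimination of the degenerate branches by means of the fibre of $P$ over $P(0)$, and a final appeal to the SUPM hypothesis. The paper chooses $F=1/P(f)$, $G=1/P(g)$ and $H=F''/F'-G''/G'$, so that $H\equiv0$ integrates in one step to $1/P(f)\equiv c_{0}/P(g)+c_{1}$; your Lahiri-type $H$ produces a general M\"{o}bius relation and hence a slightly longer case analysis, but that difference is routine.

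The genuine gap is in the step you yourself call the obstacle, and as described it does not close. First, the observation that the fibre $P^{-1}(w)$ attached through the M\"{o}bius relation to the level set $P(f)=P(0)$ is disjoint from $S=P^{-1}(0)$ buys nothing: $g$ is free to hit a second full fibre of $n$ points with frequency comparable to $nT(r,g)$, so ``$g$ meets two disjoint fibres on large sets'' plus the second main theorem yields no contradiction. What actually works (and is what the paper does) is to track the \emph{poles}: writing the degenerate branch as $P(f)\equiv P(g)/(c_{1}P(g)+c_{0})$ with $c_{1}\neq0$, the poles of $P(f)$, i.e. of $f$, occur exactly where $P(g)=-c_{0}/c_{1}$. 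If $-c_{0}/c_{1}\neq P(0)$, apply the second main theorem to $P(g)$ with the three values $\infty$, $P(0)$, $-c_{0}/c_{1}$: the $P(0)$-points contribute at most $(m_{1}+m_{2})T(r,g)$ because that fibre has only $m_{1}+m_{2}$ elements, while the other two reduced counting functions are controlled by the poles of $g$ and of $f$; this is the only place where $n\geq m_{1}+m_{2}+3$ (resp. $+1$) enters. Second, you supply no argument for the coincident branch $-c_{0}/c_{1}=P(0)$, and it is \emph{not} killed by $n\geq m_{1}+m_{2}+3$ but by $m_{1}+m_{2}\geq5$ (resp. $3$): there $P(f)\equiv P(g)/\bigl(c_{1}(P(g)-P(0))\bigr)$, so (using $P(0)\neq0$, whose real role is to ensure the numerator does not vanish at such points) every point where $g$ lies in $P^{-1}(P(0))$ is a pole of $f$; the paper then notes that zeros of $g-b_{j}$ (simple zeros of $P-P(0)$) have multiplicity at least $n$ and zeros of $g-c_{i}$ (multiple zeros, of order $l_{i}<n$) have multiplicity at least $2$, and the second main theorem applied to $g$ with these $m_{1}+m_{2}$ values gives the contradiction (a cruder count, bounding $\sum_{b}\ol{N}(r,b;g)$ by $\ol{N}(r,\infty;f)\leq T(r,g)+O(1)$, also suffices). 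So the operative dichotomy is whether the value of $P(g)$ at the poles of $f$ equals $P(0)$, not whether the transported fibre differs from $S$; each numerical hypothesis is needed for exactly one of the two branches, whereas your sketch attributes both contradictions to $n\geq m_{1}+m_{2}+3$ and leaves the coincident branch untreated.
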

\begin{theo}\label{thb1.2} Let $P(z)=z^{n}+a_{n-1}z^{n-1}+\ldots+a_{1}z+a_{0}$ be a monic polynomial of degree $n$ in $\mathbb{C}[z]$ with $P(0)\not=0$. If $P(z)-P(0)$ has $m_{1}$ simple zeros  and $m_{2}$  multiple zeros, and $n\geq 2(m_{1}+m_{2})+2$ (resp. $n\geq 2(m_{1}+m_{2})+1$), then the following two statements are equivalent:
\begin{enumerate}
\item [i)] $P$ is a SUPM (resp. SUPE).
\item [ii)] $P$ is a UPM (resp. UPE).
\end{enumerate}
\end{theo}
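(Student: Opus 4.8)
Since the defining condition of a SUPM (resp.\ SUPE) reduces at $c=1$ to that of a UPM (resp.\ UPE), the implication (i)$\,\Rightarrow\,$(ii) is immediate, so the content is the converse. The plan is to prove that \emph{$P(f)\equiv c\,P(g)$ for nonconstant meromorphic (resp.\ entire) $f,g$ and $c\ne 0$ forces $c=1$}; once this is known, applying the UPM (resp.\ UPE) hypothesis to $P(f)\equiv P(g)$ gives $f\equiv g$, so $P$ is a SUPM (resp.\ SUPE). (It is worth noting that this forcing of $c=1$ will use only the degree bound.) So suppose $c\ne 1$. Put $Q(z):=P(z)-P(0)$; then $Q(0)=0$ and, by hypothesis, $Q$ has exactly $\ell:=m_{1}+m_{2}$ distinct zeros $b_{1},\dots,b_{\ell}$, with multiplicities $p_{1},\dots,p_{\ell}$ summing to $n$. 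Subtracting $P(0)$ from $P(f)\equiv cP(g)$ gives the identity $Q(f)\equiv c\,Q(g)+(c-1)P(0)$, in which $(c-1)P(0)\ne 0$; moreover $n\,T(r,f)=T(r,P(f))+O(1)=T(r,cP(g))+O(1)=n\,T(r,g)+O(1)$, so $T(r,f)=T(r,g)+O(1)$, and I write $T(r)$ for this common growth order.

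Let $\mathcal{A}:=P^{-1}(P(0))=\{b_{1},\dots,b_{\ell}\}$ and $\mathcal{B}:=P^{-1}(cP(0))$, of cardinalities $\ell$ and $s$; since $P(0)\ne 0$ and $c\ne 1$ we have $P(0)\ne cP(0)$ and hence $\mathcal{A}\cap\mathcal{B}=\varnothing$. For $\mathcal{S}\subset\mathbb{C}$ write $\overline{N}(r,\mathcal{S};h):=\sum_{a\in\mathcal{S}}\overline{N}(r,a;h)$. Reading the identity $Q(f)=cQ(g)+(c-1)P(0)$ pointwise shows $\{z:f(z)\in\mathcal{B}\}=\{z:g(z)\in\mathcal{A}\}$ as point sets: if $f(z_{0})\in\mathcal{B}$ then $Q(f)(z_{0})=(c-1)P(0)$, forcing $Q(g)(z_{0})=0$, i.e.\ $g(z_{0})\in\mathcal{A}$, and symmetrically; moreover $\infty$ is excluded on both sides, since a pole of one of $f,g$ would force a pole of the other through the identity. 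In particular $\overline{N}(r,\mathcal{B};f)=\overline{N}(r,\mathcal{A};g)$. I would then apply the Second Main Theorem to $f$ with the $\ell+s+1$ distinct values $\mathcal{A}\cup\mathcal{B}\cup\{\infty\}$ (in the entire case, with the $\ell+s$ values $\mathcal{A}\cup\mathcal{B}$), and use $\overline{N}(r,\mathcal{A};f)\le\ell\,T(r)+O(1)$, $\overline{N}(r,\mathcal{B};f)=\overline{N}(r,\mathcal{A};g)\le\ell\,T(r)+O(1)$ and $\overline{N}(r,\infty;f)\le T(r)+O(1)$; dividing by $T(r)$ and letting $r\to\infty$ then yields
\[
s\le\ell+2\qquad(\text{resp.}\ s\le\ell+1\ \text{in the entire case}).
\]

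For the reverse inequality I would count the zeros of $P'$ by the fibre of $P$ containing them: those over $P(0)$ are precisely the multiple zeros of $Q$, of total multiplicity $\sum_{i}(p_{i}-1)=n-\ell$, and those over $cP(0)$ are the multiple zeros of $P-cP(0)$, of total multiplicity $n-s$; these two families are disjoint (because $P(0)\ne cP(0)$) and together account for at most $\deg P'=n-1$ zeros, so $(n-\ell)+(n-s)\le n-1$, i.e.\ $s\ge n-\ell+1$. Combining the two estimates gives $n-\ell+1\le\ell+2$, that is $n\le 2\ell+1=2(m_{1}+m_{2})+1$, contradicting the hypothesis $n\ge 2(m_{1}+m_{2})+2$; in the entire case one gets $n\le 2(m_{1}+m_{2})$, contradicting $n\ge 2(m_{1}+m_{2})+1$. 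Therefore $c=1$, and the theorem follows.

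The step I expect to carry the weight is making the two bounds on $s=\#P^{-1}(cP(0))$ collide. The set identity $\{f\in\mathcal{B}\}=\{g\in\mathcal{A}\}$ is the mechanism by which the Second Main Theorem applied to $f$ alone can register $g$, giving the cheap bound $s\le\ell+2$; this must be defeated by the lower bound $s\ge n-\ell+1$, which encodes how heavily $P$ must ramify over $cP(0)$ precisely \emph{because} it ramifies so much over $P(0)$ — the latter being exactly the hypothesis that $Q=P-P(0)$ has only $m_{1}+m_{2}$ distinct zeros. Forcing these two to be incompatible is what fixes the sharp numerology $n\ge 2(m_{1}+m_{2})+2$ (resp.\ $+1$); the one point needing care is that the fibre-wise count of the zeros of $P'$ loses nothing, in particular when $cP(0)$ happens to be a critical value of $P$ and so $s<n$.
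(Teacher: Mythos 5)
Your argument is correct, but the mechanism you use for the key step---forcing $c=1$---is genuinely different from the paper's. The paper applies the second main theorem once to the composite function $P(f)$ with the three values $\infty$, $P(0)$ and $P(0)/c_{0}$: since $P(f)=P(0)$ exactly where $f$ hits the $m_{1}+m_{2}$ distinct zeros of $P-P(0)$, and $P(f)=P(0)/c_{0}$ exactly where $g$ does (via $P(g)\equiv c_{0}P(f)$), it gets $nT(r,f)\leq \overline{N}(r,\infty;f)+2(m_{1}+m_{2})T(r,f)+S(r,f)$ through $T(r,P(f))=nT(r,f)+O(1)$, which contradicts the degree hypothesis immediately; no information about the fibre $P^{-1}(cP(0))$ is ever needed. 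You instead run the second main theorem on $f$ itself over the two full fibres $\mathcal{A}=P^{-1}(P(0))$ and $\mathcal{B}=P^{-1}(cP(0))$, using the same underlying set identity $\{f\in\mathcal{B}\}=\{g\in\mathcal{A}\}$ to bound $\overline{N}(r,\mathcal{B};f)$ by $\ell\,T(r)$, and you must then supply the extra algebraic ingredient $s=\#\mathcal{B}\geq n-\ell+1$, obtained by counting the zeros of $P'$ lying in the two disjoint fibres against $\deg P'=n-1$. Both routes produce exactly the same numerology; the paper's is shorter and needs no fibre-counting lemma, while yours stays at the level of $f$ and makes explicit the ramification trade-off between the two fibres of $P$. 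One point you should state precisely: in the entire case your bound $s\leq\ell+1$ requires the form of the second main theorem in which $\infty$ is included with $\overline{N}(r,\infty;f)=0$, i.e. $(q-1)T(r,f)\leq\sum\overline{N}(r,a_{j};f)+S(r,f)$ for $q$ finite targets; the generic $(q-2)$ inequality applied only to $\mathcal{A}\cup\mathcal{B}$ would give merely $s\leq\ell+2$, which does not contradict $n\geq 2(m_{1}+m_{2})+1$.
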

%%%%%%%%%%%%%%%%%%%%%%%%%%%%%%%%%%%%%%%%%%%%%%%%%%%%%%%%%%%%%%%%%%%%%%%%%%%%%%%%%%%%%%%%%%%%%%%%%%%%%%%%%%%%%%%%%%%%%%%%%%%%%%%%%%%%%%%%%%%%%%%%
%%%%%%%%%%%%%%%%%%%%%%%%%%%%%%%%%%%%%%%%%%%%%%%%%%%%%%%%%%%%%%%%%%%%%%%%%%%%%%%%%%%%%%%%%%%%%%%%%%%%%%%%%%%%%%%%%%%%%%%%%%%%%%%%%%%%%%%%%%%%%%%%
\begin{proof}[\textbf{Proof of the theorem \ref{thb1.1}}]
Since, the two cases $(ii)\Rightarrow (iii)$ and $(iii)\Rightarrow (i)$ are straightforward, so we only prove that $(i) \Rightarrow (ii)$.\par
Assume that $P(z)$ is a SUPM (resp. SUPE) and $E_{f}(S,2)=E_{g}(S,2)$. Now, we put
$$F(z):=\frac{1}{P(f(z))}~~\text{and}~~G(z):=\frac{1}{P(g(z))}.$$
Let $S(r):(0,\infty)\rightarrow\mathbb{R}$ be any function satisfying $S(r)=o(T(r,F)+T(r,G))$ for $r\rightarrow\infty$ outside a set of finite Lebesgue measure. Next we suppose that
$$H(z):=\frac{F''(z)}{F'(z)}-\frac{G''(z)}{G'(z)}.$$
First we assume that $H\not\equiv 0$. The lemma of logarithmic derivative gives
\bea\label{sus}m(r,H)=S(r).\eea
By construction of $H$, $H$ has at most simple poles and poles of $H$ can only occur at poles of $F$ and $G$, and zeros of $F'$ or $G'$ (\cite{ckps}). Since $F$ and $G$ share $\infty$ with weight $2$, thus
\bea \label{equn1.3} N(r,\infty;H)&\leq& \sum_{j=1}^{k}\left(\overline{N}(r,\lambda_{j};f)+\overline{N}(r,\lambda_{j};g)\right)+\ol{N}_{0}(r,0;f')+\ol{N}_{0}(r,0;g') \\
\nonumber &&+ \ol{N}(r,\infty;f)+\ol{N}(r,\infty;g)+\ol{N}_{\ast}(r,\infty;F,G),\eea
where $\lambda_{1},\lambda_{2},\ldots,\lambda_{k}$ are the distinct zeros of $P'(z)$. ( Here we write $\ol{N}_{0}(r,0;f')$ for the reduced counting function of zeros of $f'$, which are not zeros of $\prod_{i=1}^{n}(f-\alpha_{i})\prod_{j=1}^{k}(f-\lambda_{j})$. Similarly $\ol{N}_{0}(r,0;g')$ is defined. Also we write $\ol{N}_{\ast}(r,\infty;F,G)$ to denote the  reduced counting function of those poles of $F$ whose multiplicities differ from the multiplicities of the corresponding poles of $G$.)\par
Now the Laurent series expansion of $H$ shows that $H$ has a zero at every  simple pole of $F$ (hence, that of $G$). Thus using the first fundamental theorem, we conclude that
\bea \label{equn1.1} N(r,\infty;F|=1)=N(r,\infty;G|=1)\leq N(r,0;H)\leq N(r,\infty;H)+S(r), \eea
where $N(r,\infty;F|=1)$ is the the counting function of simple poles of $F$. Thus combining the inequalities (\ref{equn1.3}) and (\ref{equn1.1}), we obtain
\bea\label{aaess}&&\ol{N}(r,\infty;F)+\ol{N}(r,\infty;G)-\ol{N}_{0}(r,0;f')-\ol{N}_{0}(r,0;g')\\
\nonumber &\leq& \sum_{j=1}^{k}\left(\overline{N}(r,\lambda_{j};f)+\overline{N}(r,\lambda_{j};g)\right)+\ol{N}(r,\infty;f)+\ol{N}(r,\infty;g)\\
\nonumber&&+\ol{N}(r,\infty;F|\geq 2)+\ol{N}(r,\infty;G)+\ol{N}_{\ast}(r,\infty;F,G)+S(r)\\
\nonumber &\leq& \sum_{j=1}^{k}\left(\overline{N}(r,\lambda_{j};f)+\overline{N}(r,\lambda_{j};g)\right)+\ol{N}(r,\infty;f)+\ol{N}(r,\infty;g)\\
\nonumber&&+\frac{1}{2}\{N(r,\infty;F)+N(r,\infty;G)\}+S(r).
\eea
The second fundamental theorem applied to $f$ and $g$ gives
\bea \label{equn1.444} && (n+k-1)\left(T(r,f)+T(r,g)\right)\\
\nonumber &\leq& \ol{N}(r,\infty;f)+\sum_{i=1}^{n}\ol{N}(r,\alpha_{i};f)+\sum_{j=1}^{k}\overline{N}(r,\lambda_{j};f)-\ol{N}_{0}(r,0;f')+\ol{N}(r,\infty;g)\\
\nonumber &&+\sum_{i=1}^{n}\ol{N}(r,\alpha_{i};g)+\sum_{j=1}^{k}\overline{N}(r,\lambda_{j};g)-\ol{N}_{0}(r,0;g')+S(r,f)+S(r,g)\\
\nonumber &\leq& \ol{N}(r,\infty;f)+\ol{N}(r,\infty;g)+\sum_{j=1}^{k}\left(\overline{N}(r,\lambda_{j};f)+\overline{N}(r,\lambda_{j};g)\right)\\
\nonumber &&+\ol{N}(r,\infty;F)+\ol{N}(r,\infty;G)-\ol{N}_{0}(r,0;f')-\ol{N}_{0}(r,0;g')+S(r)\\
\nonumber &\leq& 2\left(\ol{N}(r,\infty;f)+\ol{N}(r,\infty;g)\right)+(2k+\frac{n}{2})\left(T(r,f)+T(r,g)\right)+S(r),\eea
which contradicts the assumption $n\geq2k+7$ (resp. $n\geq2k+3$). Thus from now we assume that $H\equiv 0$.  Then by integration, we obtain
\bea\label{es1} \frac{1}{P(f(z))}&\equiv&\frac{c_0}{P(g(z))}+c_{1},\eea
where $c_{0}$ is a non zero complex constant. Thus
$$T(r,f)=T(r,g)+O(1).$$
%%%%%%%%%%%%%%%%%%%%%%%%%%%%%%%%%%%%%%%%%%%%%%%%%%%%%%%%%%%%%%%%%%%%%
Now we distinguish two cases:\\
\textbf{Case-I} Assume that $c_{1}\not=0.$ Then equation (\ref{es1}) can be written as
$$P(f)\equiv \frac{P(g)}{c_{1}P(g)+c_0}.$$
Thus $$\overline{N}(r,-\frac{c_0}{c_1};P(g))=\overline{N}(r,\infty;P(f))=\overline{N}(r,\infty;f).$$
Since $P(z)-P(0)$ has $m_{1}$ simple zeros and $m_{2}$ multiple zeros, so we can assume
$$P(z)-P(0)=(z-b_{1})(z-b_{2})\ldots(z-b_{m_{1}})(z-c_{1})^{l_{1}}(z-c_{2})^{l_{2}}\ldots(z-c_{m_{2}})^{l_{m_{2}}},$$
where $l_{i}\geq 2$ for $1\leq i\leq m_{2}$. Moreover, $l_{i}< n$ as $P'(z)$ has at least two zeros.  If $P(0)\not=-\frac{c_0}{c_1}$, then the first and second fundamental theorems to $P(g)$ give
\beas &&n T(r,g)+O(1)\\
&=&T\left(r, P(g)\right)\\
&\leq& \overline{N}\left(r,\infty;P(g)\right)+\overline{N}\left(r,P(0);P(g)\right)+\overline{N}\left(r,-\frac{c_0}{c_1};P(g)\right)+S(r,P(g))\\
&\leq& \overline{N}\left(r,\infty; g\right)+\overline{N}\left(r,\infty; f\right)+(m_{1}+m_{2})T(r,g)+S(r,g),\eeas
which is impossible as $n\geq m_{1}+m_{2}+3$ (resp. $n\geq m_{1}+m_{2}+1$). Thus $P(0)=-\frac{c_0}{c_1}$. Hence
$$P(f)\equiv \frac{P(g)}{c_{1}(P(g)-P(0))}.$$
Thus every zero of $g-b_{j}$ ($1\leq j \leq m_{1}$) has a multiplicity at least $n$, and every zero of $g-c_{i}$ ($1\leq i \leq m_{2}$) has a multiplicity at least $2$.\par Thus applying the second fundamental theorem to $g$, we have
\beas
&&(m_{1}+m_{2}-1)T(r,g)\\
&\leq& \overline{N}(r,\infty;g)+\sum\limits_{j=1}^{m_1}\overline{N}(r,b_{j};g)+\sum\limits_{i=1}^{m_2}\overline{N}(r,c_{i};g)+S(r,g)\\
&\leq& \overline{N}(r,\infty;g)+\frac{1}{n}\sum\limits_{j=1}^{m_1}N(r,b_{j};g)+\frac{1}{2}\sum\limits_{i=1}^{m_2}N(r,c_{i};g)+S(r,g)\\
&\leq& \overline{N}(r,\infty;g)+\frac{m_1+m_{2}}{2}T(r,g)+S(r,g),
\eeas
which is impossible as $m_{1}+m_{2}\geq 5$ (resp. 3).\\
\textbf{Case-II} Next we assume that $c_{1}=0$. Then equation (\ref{es1}) can be written as
$$P(g)\equiv c_{0} P(f).$$ Since $P$ is a strong uniqueness polynomial, thus
$$f\equiv g.$$
This completes the proof.
\end{proof}
\begin{proof}[\textbf{Proof of the theorem \ref{thb1.2}}]
Since strong uniqueness polynomials are uniqueness polynomials, so we only prove the case $(ii)\Rightarrow (i).$ It is given that $P(z)$ is a uniqueness polynomial. Assume that $$P(g)=c_{0}P(f),$$
where $f$ and $g$ are two non-constant meromorphic functions and $c_{0}$ is any non-zero complex constant. Thus $T(r,f)=T(r,g)+O(1)$. Now, if $c_{0}\not=1$, then
$$P(g)-P(0)\equiv c_{0}(P(f)-\frac{P(0)}{c_0}).$$
Thus using the first and second fundamental theorems to $P(f)$, we obtain
\beas &&n T(r,f)+O(1)\\
&=&T\left(r, P(f)\right)\\
&\leq& \overline{N}\left(r,\infty;P(f)\right)+\overline{N}\left(r,P(0);P(f)\right)+\overline{N}\left(r,\frac{P(0)}{c_{0}};P(f)\right)+S(r,f)\\
&\leq& \overline{N}\left(r,\infty;f\right)+2(m_{1}+m_{2})T(r,f)+S(r,f),
\eeas
which contradicts to our assumptions on $n$. Thus $c_0=1$, i.e.,
$$P(f)\equiv P(g).$$
Since $P(z)$ is a uniqueness polynomial, so $f\equiv g$.  This completes the proof.
\end{proof}
%%%%%%%%%%%%%%%%%%%%%%%%%%%%%%%%%%%%%%%%%%%%%%%%%%%%%%%%%%%%%%%%%%%%%%%%%%%%%%%%%%%%%%%%%%%%%%%%%%%%%%%%%%%%%%%%%%%%%%%%%%%%%%%%%%%%%%%%%%%%%%%%%%%%%%%%%%%
\begin{center} {\bf Acknowledgement} \end{center}
The author is grateful to the anonymous referees for their valuable suggestions which considerably improved the presentation of the paper.\par
The research work is supported by the Department of Higher Education, Science and Technology \text{\&} Biotechnology, Govt. of West Bengal under the sanction order no. 216(sanc) /ST/P/S\text{\&}T/16G-14/2018 dated 19/02/2019.
%%%%%%%%%%%%%%%%%%%%%%%%%%%%%%%%%%%%%%%%%%%%%%%%%%%%%%%%%%%%%%%%%%%%%%%%%%%%%%%%%%%%%%%%%%%%%%%%%%%%%%%%%%%%%%%%%%%%%%%%%%%%%%%%%%%%%%%%%%%%%%%%%%%%%%%%%%%
%%%%%%%%%%%%%%%%%%%%%%%%%%%%%%%%%%%%%%%%%%%%%%%%%%%%%%%%%%%%%%%%%%%%%%%%%%%%%%%%%%%%%%%%%%%%%%%%%%%%%%%%%%%%%%%%%%%%%%%%%%%%%%%%%%%%%%%%%%%%%%%%%%%%%%%%%%%

\end{document}